\newtheorem{theorem}{Theorem}[section]
\newtheorem{thm}{Theorem}
\theoremstyle{remark}
\newtheorem{remark}[theorem]{Remark}
\theoremstyle{definition}
\theoremstyle{plain}
\newcommand{\muT}[1]{\liminf_{T\to\infty}\frac{1}{T}\mu\left\{\tau\in[0,T]: #1\right\}>0}
\newcommand{\norm}[2]{\left\Vert#1\right\Vert_{#2}}
\renewcommand{\Re}{\operatorname{Re}}
\begin{document}

\title[Corrigendum for ``The strong recurrence for non-zero rational parameters'']{Corrigendum for ``The generalized strong recurrence for non-zero rational parameters'' Archiv der Mathematik 95 (2010), 549--555}

\author{Takashi Nakamura}
\address{Department of Mathematics Faculty of Science and Technology \\Tokyo University of Science Noda, CHIBA 278-8510 JAPAN}
\email{nakamura\_takashi@ma.noda.tus.ac.jp}

\author{{\L}ukasz Pa\'nkowski}
\address{Faculty of Mathematics and Computer Science, Adam Mickiewicz University, Umultowska 87, 61-614 Pozna\'{n}, POLAND}
\email{lpan@amu.edu.pl}

\thanks{The first author was partially supported by JSPS Grants 21740024. The second author was partially supported by the grant no. N N201 6059 40 from National Science Centre.}

\subjclass[2000]{Primary 11K60, 11M99}

\keywords{the Riemann zeta function, self-approximation}

\begin{abstract}
In the present paper, we prove that self-approximation of $\log \zeta (s)$ with $d=0$ is equivalent to the Riemann Hypothesis. Next, we show self-approximation of $\log \zeta (s)$ with respect to all nonzero real numbers $d$. Moreover, we partially filled a gap existing in `The strong recurrence for non-zero rational parameters'' and prove self-approximation of $\zeta(s)$ for $0 \ne d=a/b$ with $|a-b|\ne 1$ and $\gcd(a,b)=1$.
\end{abstract}

\maketitle

\section{Introduction}
 
In 1981 Bagchi \cite{BagchiRecurrence} discovered that the following almost periodicity holds in the critical strip if and only if the Riemann Hypothesis is true. To state it, let $\mu \{A\}$ stand for the Lebesgue measure of a measurable set $A$, $D := \{ s \in {\mathbb{C}} : 1/2 < \Re (s) <1\}$ and $H(K)$ denote the space of non-vanishing continuous functions on a compact set $K$, which are analytic in the interior, equipped with the supremum norm $\norm{\cdot}{K}$. Then Bagchi's result can be formulated as follows (see also \cite[Section 8]{Steuding1}).
\begin{thm}\label{th:bagchiA}
The Riemann Hypothesis holds if and only if, for every compact set $K\subset D$ with connected complement and for every $\varepsilon > 0$, we have
\begin{equation}\label{eq:SelfSimZeta}
\muT{\norm{\zeta(s+i\tau) - \zeta(s)}{K} < \varepsilon}.
\end{equation}
\end{thm}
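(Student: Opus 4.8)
The plan is to prove the two implications separately, the elementary one being ``\eqref{eq:SelfSimZeta} for all $K,\varepsilon \Rightarrow$ RH'' and the deep one being its converse; I expect essentially all of the difficulty to sit in the converse, where the probabilistic machinery enters.

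For the implication toward RH I would argue by contradiction via Rouch\'e's theorem. Suppose $\zeta(\rho)=0$ for some $\rho$ with $\Re\rho=\beta>1/2$. Choose a small closed disk $K\subset D$ centred at $\rho$ on which $\rho$ is the only zero of $\zeta$ and with $\zeta$ non-vanishing on $\partial K$, put $m:=\min_{s\in\partial K}|\zeta(s)|>0$, and apply the hypothesis with $\varepsilon:=m/2$. For every $\tau$ in the resulting positive-density set we then have $|\zeta(s+i\tau)-\zeta(s)|<\varepsilon<|\zeta(s)|$ on $\partial K$, so by Rouch\'e's theorem $\zeta(\cdot+i\tau)$ and $\zeta$ have equally many zeros inside $K$; in particular $\zeta$ vanishes somewhere in $K+i\tau$. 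Since $K$ has a fixed radius $r$ and lies in $\{\Re s\ge 1/2+\delta\}$ for some $\delta>0$, the disks $K+i\tau$ attached to $\tau$-values spaced more than $2r$ apart are disjoint, so these zeros are genuinely distinct. A positive density of admissible $\tau$ in $[0,T]$ therefore forces $\gg T$ zeros of $\zeta$ with real part $\ge 1/2+\delta$ and imaginary part $\le T+O(1)$, contradicting the classical zero-density estimate $N(1/2+\delta,T)=o(T)$.

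For the converse, ``RH $\Rightarrow$ \eqref{eq:SelfSimZeta}'', I would pass to the probabilistic setting. The first ingredient is Bagchi's functional limit theorem: viewing $\tau\mapsto\zeta(\cdot+i\tau)$ as a random element of the space $H(D)$ of analytic functions on $D$ equipped with the topology of uniform convergence on compacta, the measures $P_T(A):=\frac1T\mu\{\tau\in[0,T]:\zeta(\cdot+i\tau)\in A\}$ converge weakly to a probability measure $P$, namely the distribution of the random Euler product $\prod_p\bigl(1-X_p p^{-s}\bigr)^{-1}$ with independent $X_p$ uniform on the unit circle. The second ingredient is the identification of its support: the random product is almost surely non-vanishing, and by Hurwitz's theorem a locally uniform limit of non-vanishing analytic functions is either non-vanishing or identically zero, so $\operatorname{supp}P=\{f\in H(D):f(s)\ne0\ \text{on}\ D\}\cup\{0\}$, the nontrivial direction of this equality being exactly the content of the Voronin--Bagchi universality theorem.

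It remains to feed RH into this picture. Under RH the function $\zeta$ is non-vanishing throughout $D$, so $\zeta|_D\in\operatorname{supp}P$. Hence for any compact $K\subset D$ and any $\varepsilon>0$ the set $U:=\{f\in H(D):\norm{f-\zeta}{K}<\varepsilon\}$ is an open neighbourhood of a support point, whence $P(U)>0$. Applying the portmanteau theorem to the open set $U$ gives $\liminf_{T\to\infty}P_T(U)\ge P(U)>0$, and since $P_T(U)=\frac1T\mu\{\tau\in[0,T]:\norm{\zeta(s+i\tau)-\zeta(s)}{K}<\varepsilon\}$, this is precisely \eqref{eq:SelfSimZeta}. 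The main obstacle is the support characterisation (equivalently, the universality input): establishing the limit theorem and then showing that an arbitrary non-vanishing target is approximable by the random Euler product rests on the rearrangement and denseness arguments underlying Voronin's theorem. The role of RH is then clean but decisive, as it is exactly the hypothesis placing $\zeta$ inside that support.
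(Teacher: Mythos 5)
Your proposal is correct and takes essentially the same route as the paper, which defers this theorem to Bagchi and to \cite[Section 8]{Steuding1}: the implication toward RH via Rouch\'e's theorem, disjoint translated disks, and the classical zero-density bound $N(\sigma,T)=o(T)$ for $\sigma>1/2$, and the converse by placing $\zeta$ (non-vanishing in $D$ under RH) inside the scope of the Voronin--Bagchi universality machinery. Your spelling out of the converse via the functional limit theorem, support identification, and portmanteau theorem is just the standard proof of the positive-density form of universality that the paper invokes in one line.
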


In 2010 Nakamura \cite{Nakamura1} showed the following property which might be called \textit{self-approximation} of the Riemann zeta function. 
\begin{thm}
For every algebraic irrational number $d\in\mathbb{R}$, every compact set $K\subset D$ with connected complement and every $\varepsilon>0$, we have
\begin{equation*}
\muT{\norm{\zeta(s+id\tau) - \zeta(s+i\tau)}{K} < \varepsilon}.
\end{equation*}
\end{thm}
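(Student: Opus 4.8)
The plan is to deduce this self-approximation statement from the universality theory for the Riemann zeta function, specifically from the joint denseness of value distributions governed by the limit measure on the space of analytic functions. Let me sketch the Voronin-type / Bagchi-type machinery I would invoke.

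First I would set up the probabilistic framework. Consider the function space $H(D)$ of analytic functions on $D$ with the topology of uniform convergence on compacts, and the random Euler product $\zeta(s,\omega) = \prod_p (1 - \omega(p) p^{-s})^{-1}$ where $\omega = (\omega(p))_p$ ranges over the infinite torus $\Omega = \prod_p \{|z|=1\}$ equipped with Haar measure. The key analytic input is that the two-dimensional process $\tau \mapsto (\zeta(s+i\tau), \zeta(s+id\tau))$ has a limiting distribution, and I would want its support to be all of $H(D) \times H(D)$ (restricted to non-vanishing functions). Granting such a joint universality statement, the event $\|\zeta(s+id\tau)-\zeta(s+i\tau)\|_K < \varepsilon$ corresponds to an open set of positive measure in $H(K)\times H(K)$ (namely pairs $(f,g)$ with $\|f-g\|_K<\varepsilon$), so its lower density is positive.

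The crux is establishing the joint limiting distribution with full support, and this is exactly where the arithmetic of $d$ enters. The standard approach is to verify that the two families $\{t\log p\}_p$ and $\{dt\log p\}_p$ are jointly uniformly distributed modulo $1$ as $t$ ranges over $[0,T]$; by Weyl's criterion this reduces to showing that no nontrivial integer linear combination of the frequencies $\{\log p\}_p \cup \{d\log p\}_p$ vanishes, i.e.\ that the numbers $\log p$ together with $d\log p$ are linearly independent over $\mathbb{Q}$. Here I would exploit that $d$ is algebraic irrational: a vanishing relation $\sum_p m_p \log p + d\sum_p n_p \log p = 0$ would force $d = -(\sum m_p \log p)/(\sum n_p \log p)$ to be a ratio of logarithms of rationals, hence $d$ would be a root of an equation $\prod p^{m_p} = (\prod p^{n_p})^{-d}$; by the Gelfond--Schneider theorem such a $d$ cannot be algebraic irrational unless the relation is trivial. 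This number-theoretic independence is the main obstacle and the place where algebraicity and irrationality are both used.

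With linear independence in hand, I would run the ergodic-theoretic argument: the flow $\tau \mapsto (\omega_1 p^{-i\tau}, \omega_2 p^{-id\tau})$ on $\Omega \times \Omega$ is ergodic with respect to Haar measure, the Birkhoff ergodic theorem converts the time average into a space average, and Bagchi's functional-analytic criterion (density of the support plus a Mergelyan-type approximation argument on $K$, using connectedness of the complement to approximate the target $g-f$ by a polynomial and then by a suitable Dirichlet polynomial) yields that the support of the joint limiting measure contains all admissible pairs. Finally I would translate positivity of the limiting measure of the open condition $\|f-g\|_K<\varepsilon$ into the claimed positive lower density of $\tau$. I expect the genuinely hard step to be the joint universality with the off-diagonal scaling by $d$; once the frequency independence is secured via Gelfond--Schneider, the rest follows the well-trodden path of Voronin's universality theorem and Bagchi's thesis.
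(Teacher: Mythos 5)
Your proposal is correct and takes essentially the same route as the proof of this statement, which the present paper does not prove but only quotes as Theorem B from \cite{Nakamura1}: there too the crux is the linear independence over $\mathbb{Q}$ of $\{\log p\}\cup\{d\log p\}$, secured by transcendence theory (Baker's theorem in \cite{Nakamura1}, of which your Gelfond--Schneider argument is the relevant special case), after which the joint limit theorem, the support computation, and Bagchi's Mergelyan-based machinery yield the positive lower density exactly as you outline. The only detail to add is the trivial case $\sum_p n_p\log p=0$ in the independence argument, which is disposed of by unique factorization.
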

Note that the self-approximation with respect to almost all real numbers $d$ was also verified in \cite{Nakamura1}. Afterwards, Pa\'nkowski \cite{PankowskiRec} showed the above result for any irrational number $d$ whereas Garunk\v{s}tis \cite{Garu} and Nakamura \cite{Nakamura3} investigated the self-approximation for non-zero rational numbers, independently. Unfortunately, the papers \cite{Garu} and \cite{Nakamura3} contain a gap in the proof of the main theorem, so actually their methods work only for the logarithm of the Riemann zeta function (see Remark \ref{rem:zetalogzeta}). 

In this paper, we prove that self-approximation of $\log \zeta (s)$ with $d=0$ is equivalent to the Riemann Hypothesis in Theorem \ref{th:salogz1}. Next, we show self-approximation of $\log \zeta (s)$ for all nonzero real numbers $d$ in Theorem \ref{th:salogz2}. Moreover, we partially filled the gap mentioned above and prove self-approximation of $\zeta(s)$ for $0 \ne d=a/b$ with $|a-b|\ne 1$ and $\gcd(a,b)=1$ in Theorem \ref{th:sazabn1}.

\section{Self-approximation of $\log \zeta (s)$}

Firstly, we show the following theorem which is an analogue of Theorem \ref{th:bagchiA}. 
\begin{theorem}\label{th:salogz1}
The Riemann Hypothesis holds if and only if, for every compact set $K\subset D$ with connected complement and for every $\varepsilon > 0$, we have
\begin{equation}\label{eq:SelfSimLogZeta1}
\muT{\norm{\log \zeta(s+i\tau) - \log \zeta(s)}{K} < \varepsilon}.
\end{equation}
\end{theorem}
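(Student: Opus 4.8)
The plan is to deduce both implications from a single statement about the limiting distribution of the vertical shifts of $\log\zeta$, in the spirit of Bagchi's proof of Theorem~\ref{th:bagchiA}. I would work in the space $\Hol(D)$ of functions holomorphic on $D$, topologised by uniform convergence on compacta, together with Bagchi's probabilistic model on the infinite torus $\Omega=\prod_p\{|z|=1\}$ with Haar measure, setting $\log\zeta(s,\omega)=-\sum_p\operatorname{Log}\bigl(1-\omega(p)p^{-s}\bigr)$, which almost surely defines an element of $\Hol(D)$. The first input to record is Bagchi's limit theorem in its logarithmic form (\cite{BagchiRecurrence}): as $T\to\infty$ the $\Hol(D)$-valued random variable $\tau\mapsto\log\zeta(\cdot+i\tau)$, with $\tau$ uniform on $[0,T]$, converges in law to $\log\zeta(\cdot,\omega)$; call the limit law $P$. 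The standard consequence I will use is that for any $g\in\Hol(D)$, any compact $K\subset D$ and any $\varepsilon>0$, the lower density in \eqref{eq:SelfSimLogZeta1} (with $\log\zeta$ in the first slot replaced by $g$) is positive if and only if $g$ lies in the topological support $\operatorname{supp}P$. For the support itself I would invoke Voronin's universality theorem (\cite{Steuding1}): since a function on $K$ has the form $\log f$ with $f$ holomorphic and non-vanishing precisely when it is itself holomorphic, universality of $\zeta$ translates into $\operatorname{supp}P=\Hol(D)$.

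For the implication that RH implies \eqref{eq:SelfSimLogZeta1}, I would argue as follows. Assuming RH, $\zeta$ is zero-free on $\Re s>1/2$, so $\log\zeta$ and all of its vertical shifts are genuine elements of $\Hol(D)$ and the limit law $P$ is available. Taking $g=\log\zeta$, which trivially lies in $\Hol(D)=\operatorname{supp}P$, the criterion above immediately gives that the lower density in \eqref{eq:SelfSimLogZeta1} is positive, as required. As an alternative one may transfer Theorem~\ref{th:bagchiA} directly: enlarging $K$ to a compact $K'$ with $K\subset\operatorname{int}K'$ and connected complement, the bound $\norm{\zeta(\cdot+i\tau)-\zeta}{K'}<\delta$ on a set of positive density forces, via Cauchy's estimates, $\zeta'/\zeta(\cdot+i\tau)$ close to $\zeta'/\zeta$ on $K$, whence $\log\zeta(\cdot+i\tau)-\log\zeta$ is close to a constant on each component of $K$.

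For the converse I would argue by contraposition. If RH fails, fix a zero $\rho\in D$ of $\zeta$ and a closed disc $K\subset D$ centred at $\rho$ with (connected) complement. Then $\log\zeta$ has a logarithmic singularity at $\rho$ and is not an element of $\Hol(K)$; in particular $\norm{\log\zeta(\cdot+i\tau)-\log\zeta}{K}$ fails to be finite, since a uniform limit on $K$ of the holomorphic shifts $\log\zeta(\cdot+i\tau)$ is again holomorphic and hence cannot reproduce such a singularity. Thus \eqref{eq:SelfSimLogZeta1} cannot hold for this $K$, and its validity for every compact $K\subset D$ forces $\zeta$ to be zero-free on $D$, which is RH.

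The step I expect to be the main obstacle, and the one that genuinely separates the $\log\zeta$ statement from its $\zeta$ analogue, is the control of the imaginary part, namely of $\arg\zeta(s+i\tau)-\arg\zeta(s)$ modulo $2\pi$. Exponentiating a $\zeta$-approximation only determines $\log\zeta(\cdot+i\tau)-\log\zeta$ up to an a priori uncontrolled integer multiple of $2\pi i$ on each component of $K$, and ruling out a nonzero winding is exactly the gap alluded to in Remark~\ref{rem:zetalogzeta}. I would circumvent it by working with the limit theorem for $\log\zeta$ directly, where every shift is continued from the region $\Re s\to+\infty$ (on which $\log\zeta\to0$) along one fixed coherent branch, so that the argument is tracked continuously and the universality computation of $\operatorname{supp}P$ is carried out with respect to the correct branch; this keeps the constant in the transfer argument genuinely small rather than an unwanted multiple of $2\pi i$.
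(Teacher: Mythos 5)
Your forward implication (RH $\Rightarrow$ \eqref{eq:SelfSimLogZeta1}) is fine and is in substance identical to the paper's: the paper's whole argument for that direction is ``apply Voronin's universality theorem,'' and your Bagchi-style formulation (limit law of the shifts of $\log\zeta$, support equal to $\Hol(D)$, then take $g=\log\zeta$, which is admissible in $\Hol(D)$ under RH) is the standard way to make that one line precise, including the branch bookkeeping you discuss at the end. The problem is your converse.

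In the contraposition you assert that $\norm{\log\zeta(\cdot+i\tau)-\log\zeta}{K}$ is never finite, ``since a uniform limit on $K$ of the holomorphic shifts $\log\zeta(\cdot+i\tau)$ is again holomorphic.'' But the shifts are \emph{not} all holomorphic on $K$: precisely because RH is assumed to fail, $\zeta$ has zeros with real part $>1/2$, and whenever such a zero lies in $K+i\tau$ the shifted function has a logarithmic singularity inside $K$ as well. For those $\tau$ your argument says nothing, and the norm can genuinely be finite: if $\zeta(\rho+i\tau)=0$ with the same multiplicity as the zero at $\rho$, then $\log\zeta(s+i\tau)-\log\zeta(s)=\log\bigl(\zeta(s+i\tau)/\zeta(s)\bigr)$ is bounded near $\rho$. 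So what your reasoning actually yields is only: every $\tau$ counted in \eqref{eq:SelfSimLogZeta1} is such that $\zeta$ has a zero in $K+i\tau$. That is exactly where the paper's proof \emph{begins}, not ends. One must still show that the set of such $\tau\in[0,T]$ has density zero, and this is not free: the trivial Riemann--von Mangoldt bound $N(T)\sim (T/2\pi)\log T$ only bounds its measure by $O(\varepsilon T\log T)$, which is not $o(T)$. The paper closes this by a counting argument (modifying Steuding's proof of Theorem 8.3): shifts $\tau_1,\tau_2$ with $|\tau_1-\tau_2|\geq 2\varepsilon$ produce distinct zeros, so positive lower density would force $\gg_{\varepsilon} T$ zeros with real part $\geq\Re\rho-\varepsilon>1/2$ up to height $T$, contradicting a zero-density theorem ($N(\sigma,T)=o(T)$ for $\sigma>1/2$, Bohr--Landau/Carlson).

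There is also a cheaper repair of your specific set-up, which avoids density theorems altogether: letting $s\to\rho$ inside $K$ (avoiding the finitely many singularities of either function), we have $\Re\log\zeta(s)=\log|\zeta(s)|\to-\infty$, so the inequality $\norm{\log\zeta(\cdot+i\tau)-\log\zeta}{K}<\varepsilon$ forces $\log|\zeta(s+i\tau)|\to-\infty$ as well, hence $\zeta(\rho+i\tau)=0$ \emph{exactly}. The set of such $\tau$ is countable (the zeros of $\zeta$ on the line $\Re s=\Re\rho$ are isolated), hence of measure zero, so \eqref{eq:SelfSimLogZeta1} fails for this $K$. Either this observation or the paper's zero-counting argument must be supplied; as written, your converse is incomplete.
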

\begin{proof}
If the Riemann hypothesis is true we can apply Voronin's universality theorem. 

Suppose that there exists a zero $\xi \in D$ of $\zeta (s)$. Put $K_{\varepsilon} := \{ s \in {\mathbb{C}} : |s-\xi| \le \varepsilon \} \subset D$. Now assume that for a neighborhood $K_{\varepsilon}$ of $\xi$ the following relation holds:
\begin{equation}
\| \log \zeta (s+i\tau) - \log \zeta (s) \|_{K_{\varepsilon}} < \varepsilon .
\label{eq:lepf1}
\end{equation}
If a zero $\rho$ of $\zeta (s)$, where $\rho \in K_{\varepsilon} (\tau) := \{ s \in {\mathbb{C}} : |s-\xi-i\tau| \le \varepsilon \}$ does not exist, then the function $\log \zeta (s+i\tau)$ is analytic in the interior of $K_{\varepsilon}$ and bounded on $K_{\varepsilon}$. This contradicts to the above inequality. Hence a zero $\rho$ of $\zeta (s)$ exists in $K_{\varepsilon} (\tau)$. With regard to (\ref{eq:lepf1}) and the definition of $K_{\varepsilon} (\tau)$ the zeros $\xi$ and $\rho$ are intimately related; more precisely, $|\rho- \xi - i\tau| < \varepsilon$. Thus two different shifts $\tau_1$ and $\tau_2$ can lead to the same zero $\rho$, but their distance is bounded by $|\tau_1-\tau_2| < 2\varepsilon$. Therefore we obtain this lemma by modifying the proof of \cite[Theorem 8.3]{Steuding1} and using the classical Rouch\'e theorem.
\end{proof}

The reasoning of \cite[Theorem 1.1]{PankowskiRec} and \cite[Theorem 1]{Garu} can be easily applied to prove self-approximation of $\log \zeta$. 
\begin{theorem}\label{th:salogz2}
For every real number $d \ne 0$, every compact set $K\subset D$ with connected complement and for every $\varepsilon > 0$, it holds that
\begin{equation}\label{eq:SelfSimLogZeta2}
\muT{\norm{\log \zeta(s+i\tau) - \log \zeta(s+id\tau)}{K} < \varepsilon}.
\end{equation}
\end{theorem}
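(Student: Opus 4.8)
The plan is to prove Theorem \ref{th:salogz2} by reducing the two-parameter self-approximation to a single effective shift and then applying a universality-type joint approximation argument, exactly in the spirit of \cite[Theorem 1.1]{PankowskiRec} and \cite[Theorem 1]{Garu}. First I would observe that, since $\log \zeta(s)$ is well-defined and analytic on a neighborhood of any compact $K\subset D$ away from the zeros, it suffices to find a positive lower density of $\tau$ for which both $\log\zeta(s+i\tau)$ and $\log\zeta(s+id\tau)$ are simultaneously close, on $K$, to one and the same target analytic function $g(s)$. Indeed, if
\begin{equation*}
\norm{\log\zeta(s+i\tau)-g(s)}{K}<\varepsilon/2
\quad\text{and}\quad
\norm{\log\zeta(s+id\tau)-g(s)}{K}<\varepsilon/2,
\end{equation*}
then the triangle inequality gives \eqref{eq:SelfSimLogZeta2}. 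The natural choice is $g(s)=\log\zeta(s)$ itself, which is legitimate as a target function on $H(K)$ once $K$ has connected complement.

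The key step is the \emph{joint} (two-fold) functional limit theorem for $\log\zeta$. I would introduce a slightly enlarged compact set and consider the $\mathbb{C}^2$-valued (or $H(K)^2$-valued) random process
\begin{equation*}
\tau\longmapsto\bigl(\log\zeta(s+i\tau),\ \log\zeta(s+id\tau)\bigr),
\end{equation*}
and show that, as $T\to\infty$, its distribution converges weakly to a limit measure on $H(K)\times H(K)$ whose support is the full product $S\times S$, where $S$ is the Voronin support (the set of all $H(K)$-functions approximable by vertical shifts of $\log\zeta$). The support being a product is exactly where the hypothesis $d\ne 0$ is used: the rational independence properties of $\{\log p\}$ together with $\{\log p\}\cup\{d\log p\}$ guarantee that the two families of shifts $\{p^{-i\tau}\}$ and $\{p^{-id\tau}\}$ become jointly equidistributed on the product torus, so the two coordinates become independent in the limit. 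Granting this, since $g=\log\zeta(s)\in S$, the product support contains $(g,g)$, and the open set
\begin{equation*}
U:=\Bigl\{(f_1,f_2):\ \norm{f_1-g}{K}<\varepsilon/2,\ \norm{f_2-g}{K}<\varepsilon/2\Bigr\}
\end{equation*}
is a neighborhood of $(g,g)$ with positive limiting measure, yielding the desired positive lower density.

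The main obstacle I expect is precisely establishing that the support of the joint limiting measure is the full product $S\times S$ rather than merely the diagonal or some smaller correlated set. For the logarithm this is considerably softer than for $\zeta$ itself, because $\log\zeta$ is, up to a harmless tail, a genuine Dirichlet series $\sum_p\sum_{k\ge1}k^{-1}p^{-ks}$ in $\Re(s)>1/2$ on the relevant set, so one can work additively and invoke the Pechersky–type rearrangement / denseness lemma rather than the full multiplicative Voronin machinery; this is exactly the reduction flagged in the remark that the \cite{Garu}, \cite{Nakamura3} methods ``work only for the logarithm.'' Concretely, I would verify joint equidistribution of the flow $\tau\mapsto(\tau\log p,\ d\tau\log p)_p$ on the infinite torus: the relevant characters are trivial only when the integer combination $\sum_p(m_p+d\,n_p)\log p$ vanishes, and because $d\ne0$ one checks this forces all $m_p,n_p=0$ (using linear independence of $\{\log p\}$ over $\mathbb{Q}$ and the non-degeneracy coming from $d\neq 0$). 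Once joint equidistribution is in hand, the Bernstein/ Hilbert-space denseness argument on each coordinate, applied independently, upgrades equidistribution to full product support, and the rest of the proof is the routine measure-theoretic extraction described above.
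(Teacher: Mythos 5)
Your proposal breaks down at its central claim: that for \emph{every} real $d\neq 0$ the flow $\tau\mapsto(\tau\log p,\ d\tau\log p)_p$ equidistributes on the full product torus, i.e.\ that $\sum_p (m_p + d\,n_p)\log p = 0$ forces all $m_p=n_p=0$. This is false, and it is false precisely in the case this paper exists to treat. If $d=a/b$ is rational with $\gcd(a,b)=1$, take $m_p = a\,t_p$ and $n_p = -b\,t_p$ with integers $t_p$ not all zero; then $m_p + d\,n_p = 0$ identically, so the Weyl sum degenerates although the character is nontrivial. Irrational $d$ can fail too: for $d=\log 2/\log 3$ take $m_2=-1$, $n_3=1$ and all other components zero. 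For such $d$ the joint limiting measure is carried by a proper closed subgroup of the product torus, its support is \emph{not} $S\times S$, and your open set $U$ around $(g,g)$ has no reason to receive positive measure. Handling this degeneracy is exactly the content of the cited works: for rational $d$ one must run Kronecker--Weyl on the subtorus cut out by the relations (after rescaling, $a x_p\equiv b y_p \pmod 1$), choosing a point $\omega$ with $\omega^a=\omega^b$ so that both shifts approximate one and the same truncated Euler product $\log\zeta_z(s,\omega^a)=\log\zeta_z(s,\omega^b)$, and then concluding by the triangle inequality through this common, $z$-dependent target; this is how the paper argues in Theorem \ref{th:sazabn1}, and the paper's proof of Theorem \ref{th:salogz2} itself consists of invoking Garunk\v{s}tis's argument for rational $d$ (valid for $\log\zeta$, cf.\ Remark \ref{rem:zetalogzeta}) and Pa\'nkowski's argument for irrational $d$, which is designed to cope with relations like $d=\log 2/\log 3$.

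There is a second, independent flaw: your choice of fixed target $g(s)=\log\zeta(s)$. Unconditionally, $\log\zeta$ need not belong to $H(K)$ at all --- if $\zeta$ had a zero in $K$ it is not even analytic there, and excluding this for every compact $K\subset D$ is the Riemann Hypothesis. Worse, what your reduction demands, namely a positive lower density of $\tau$ with $\norm{\log\zeta(s+i\tau)-\log\zeta(s)}{K}<\varepsilon/2$ for every $K$ and $\varepsilon$, is equivalent to the Riemann Hypothesis by Theorem \ref{th:salogz1}, so it cannot serve as the first step of an unconditional proof. The correct reduction replaces the fixed target $\log\zeta(s)$ by the moving target $\log\zeta_z(s,\omega)$, with $\omega$ on the orbit closure, combined with the mean-value estimate that $\log\zeta(s+ic\tau)\approx\log\zeta_z(s+ic\tau)$ on $K$ for a set of $\tau$ of density arbitrarily close to $1$. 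Note also that this is why the logarithmic statement needs no restriction of the type $|a-b|\neq 1$: one never needs $\norm{\zeta_z(s,\omega^c)}{K}$ bounded uniformly in $z$, which is the estimate (\ref{eq:ztomesti}) that forces the nontrivial root of unity (hence $|a-b|\neq1$) in Theorem \ref{th:sazabn1}.
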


\begin{remark}\label{rem:zetalogzeta}
In fact, in \cite{Garu} and \cite{Nakamura3} it was claimed that the above theorem holds even for the Riemann zeta function instead of the logarithm of $\zeta(s)$. Unfortunately, the proofs of main results in \cite[Theorem 1]{Garu} and \cite[Corollary 1.2]{Nakamura3} are not sufficient, since it was only shown that $|\zeta(s+di\tau)/\zeta(s+i\tau)-1|<\varepsilon$. Obviously we have
$$
|\zeta (s+i\tau) - \zeta (s+id\tau)| = |\zeta (s+i\tau)| |\zeta (s+id\tau) / \zeta (s+i\tau) -1|.
$$
So in order to prove self-approximation of $\zeta(s)$ it should have been proved that $\zeta (s+i\tau)$ is not too large, namely we need the inequality $|\zeta (s+i\tau)| < |\zeta(s+id\tau)/\zeta(s+i\tau)-1|^{-1}$ which is not necessarily satisfied. 
\end{remark}

\section{Self-approximation of $\zeta (s)$}

In the following theorem we partially fix the gap existing in \cite[Theorem 1]{Garu} and \cite[Corollary 1.2]{Nakamura3} and prove self-approximation of $\zeta(s)$ in the case $0\ne d=a/b\in\mathbb{Q}$ with $|a-b|\ne 1$ and $\gcd(a,b)=1$.
\begin{theorem}\label{th:sazabn1}
For every $0 \ne d=a/b$ with $|a-b| \ne 1$ and $\gcd(a,b)=1$, every compact set $K\subset D$ and for every $\varepsilon > 0$, we have
\begin{equation}\label{eq:SelfSimZetaabn1}
\muT{\norm{\zeta(s+i\tau) - \zeta(s+id\tau)}{K} < \varepsilon}.
\end{equation}
\end{theorem}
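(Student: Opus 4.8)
The plan is to reduce the statement to a \emph{joint} approximation of the pair $\left(\zeta(s+ia\sigma),\zeta(s+ib\sigma)\right)$ by one and the same fixed function, which simultaneously forces the ratio to be near $1$ \emph{and} keeps $|\zeta(s+i\tau)|$ bounded, thereby supplying exactly the information missing in Remark~\ref{rem:zetalogzeta}. Writing $d=a/b$ (we may assume $b>0$) and substituting $\tau=b\sigma$ (a scaling that alters lower densities only by the factor $b$), the inequality in \eqref{eq:SelfSimZetaabn1} becomes $\norm{\zeta(s+ia\sigma)-\zeta(s+ib\sigma)}{K}<\varepsilon$. It therefore suffices to exhibit a single non-vanishing $g\in H(K)$ and a set of $\sigma$ of positive lower density on which both $\norm{\zeta(s+ia\sigma)-g}{K}<\varepsilon/2$ and $\norm{\zeta(s+ib\sigma)-g}{K}<\varepsilon/2$ hold; the triangle inequality then gives the claim, while the bound $|g|\le M$ on $K$ controls the size of $\zeta(s+i\tau)$, which is precisely the quantity the earlier arguments failed to control.

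To produce such $\sigma$ I would use the joint limit theorem for $\left(\zeta(s+ia\sigma),\zeta(s+ib\sigma)\right)$ in $H(K)\times H(K)$ (working with $K$ as in Theorem~\ref{th:bagchiA}, so that the universality framework applies). The key input is the phase flow $\sigma\mapsto\left((p^{-ia\sigma})_p,(p^{-ib\sigma})_p\right)$: by the $\mathbb{Q}$-linear independence of $\{\log p\}$ and $\gcd(a,b)=1$, its orbit closure is $\prod_p C_p$ with $C_p=\{(u,v)\in\mathbb{T}^2:u^b=v^a\}$, a connected circle for each $p$. A diagonal target $(g,g)$ is then approximable on a set of $\sigma$ of positive lower density exactly when it belongs to the support of the limiting distribution. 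I would restrict attention to the diagonal of the orbit closure: the points of $C_p$ with $u=v$ satisfy $u^{a-b}=1$, hence $u=v\in\mu_{|a-b|}$, the group of $|a-b|$-th roots of unity. Choosing the phases there makes the two Euler products coincide term by term, so the admissible limiting objects are $\left(g_\omega,g_\omega\right)$ with $g_\omega=\prod_p(1-\omega_p p^{-s})^{-1}$ and $\omega_p\in\mu_{|a-b|}$.

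It then remains to show that such diagonal functions $g_\omega$ are dense among the non-vanishing elements of $H(K)$, and \emph{this is where the hypothesis $|a-b|\ne1$ is essential}. Passing to logarithms, the task is to prove that $\left\{\sum_p \omega_p p^{-s}:\omega_p\in\mu_{|a-b|}\right\}$, together with the absolutely convergent higher-order corrections $\sum_{k\ge2}\omega_p^k/(kp^{ks})$, is dense in $H(K)$. When $|a-b|\ge2$ the group $\mu_{|a-b|}$ contains at least two distinct roots, so the difference vectors $(\omega-\omega')p^{-s}$ are non-zero; I would then invoke Pechersky's rearrangement theorem in $H(K)$ regarded as a real Hilbert space, combined with the divergence estimate $\sum_p|\langle p^{-s},e\rangle|=\infty$ for every non-zero $e$ (the Bagchi mechanism underlying Voronin universality and Theorem~\ref{th:bagchiA}), to obtain the required density, exactly as in the unrestricted case. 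When $|a-b|=1$ one has $\mu_1=\{1\}$: the diagonal collapses to the single series $\sum_p p^{-s}$, all phase freedom is lost, and the method cannot reach more than (essentially) $\log\zeta$ itself. This is precisely why the gap can only be filled partially and why $|a-b|=1$ must be excluded.

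The main obstacle, and the part I expect to absorb most of the work, is twofold: (i) justifying the passage from control of the phases to an approximation of the \emph{full} function $\zeta(s+ia\sigma)$ by the fixed $g_\omega$, which requires the mean-value and limit-theorem machinery (so that the unbounded tail is genuinely handled and not merely replaced by a finite Euler product) rather than a pointwise estimate; and (ii) verifying the density lemma with coefficients constrained to $\mu_{|a-b|}$, i.e. checking the hypotheses of the rearrangement theorem in the product setting. Granting these, the positive lower density furnished by the joint limit theorem, applied to the diagonal support point corresponding to a conveniently chosen non-vanishing $g=g_\omega$ bounded by $M$ on $K$, yields \eqref{eq:SelfSimZetaabn1}.
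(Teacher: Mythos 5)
Your central idea---forcing both shifts onto the diagonal of the orbit closure, where the admissible phases collapse to the group $\mu_{|a-b|}$ of $|a-b|$-th roots of unity, so that the two Euler products coincide and a \emph{common} bounded target supplies exactly the control missing in Remark~\ref{rem:zetalogzeta}---is precisely the mechanism of the paper's proof, including the reduction $\tau=b\sigma$ and the roles of $\gcd(a,b)=1$ and $|a-b|\ne 1$. However, your execution has a genuine gap at the step you yourself flag as the main work. The claimed density of $\left\{g_\omega:\omega_p\in\mu_{|a-b|}\right\}$ among non-vanishing elements of $H(K)$ does \emph{not} follow ``exactly as in the unrestricted case'': Pechersky's rearrangement theorem, as used in the Bagchi--Voronin machinery, produces dense sets of sums $\sum_p a_p p^{-s}$ with coefficients ranging over the \emph{full} unit circle, and the proof collapses when the coefficients are confined to a finite group (one can no longer rotate a term into an arbitrary direction, and no rounding argument repairs this, since $\sum_p p^{-\sigma}$ diverges for $\sigma<1$). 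The analogous density statement for coefficients in $\{\pm 1\}$ is a hard theorem in its own right, proved by substantially different methods, not a corollary of Pechersky. A second, related inaccuracy: the diagonal part of the support is not the set of \emph{all} $g_\omega$ with $\omega_p\in\mu_{|a-b|}$, but only of those for which the infinite product actually converges on $K$; for some choices (e.g.\ $\omega_p\equiv 1$) it diverges, and that divergence is exactly the original gap in \cite{Garu} and \cite{Nakamura3}.

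Both problems disappear once you notice that you never need density: by your own first paragraph, a \emph{single} diagonal target suffices, and this is what the paper exploits. It takes the explicit periodic choice $\omega(p_m)=\exp(2\pi i m/(a-b))$, so that $\omega^a=\omega^b$ and $\omega(p)^c$ ($c=a,b$) is a \emph{nontrivial} root of unity (this is where $\gcd(a,b)=1$ and $|a-b|\ne 1$ enter); the partial sums of $\omega(p_m)^c$ are then bounded, and partial summation yields the bound \eqref{eq:ztomesti} for the finite products $\zeta_z(s,\omega^c)=\prod_{p\le z}\left(1-\omega(p)^c p^{-s}\right)^{-1}$, \emph{uniformly in} $z$. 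Kronecker's approximation theorem (plus the mean-value steps of \cite{Garu}) gives a positive-density set of $\tau$ on which $\log\zeta(s+ic\tau)$, $c=a,b$, is within $\varepsilon$ of the common, uniformly bounded function $\log\zeta_z(s,\omega^a)=\log\zeta_z(s,\omega^b)$, and the triangle inequality finishes. This route bypasses the joint limit theorem in $H(K)\times H(K)$ and any support analysis, involves no convergence questions for infinite Euler products, and---unlike a universality-based argument---works for every compact $K\subset D$ with no connected-complement hypothesis, exactly as the theorem is stated.
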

\begin{proof}
First of all, note that it suffices to consider the case $a\ne b$, or equivalently $d\ne 1$.

Let us take $\omega(p_m) := \exp(2\pi i m/(a-b))$, where $p_m$ denotes the $m$-th prime number. Then we have $\omega^a(p)= \omega^b(p)$ since $\omega^{a-b} (p)=1$. Firstly, we show that 
\begin{equation}
|\zeta_z (s,\omega^c)| \leq \exp \left( \frac{7(1-z^{1-2\sigma})}{2\sigma-1} + |a-b| \bigl( z^{-\sigma} +
2|s| (1-z^{-\sigma}) \bigr) \right),
\label{eq:ztomesti}
\end{equation}
where $c=a,b$ and $\zeta_z (s,\omega^c) := \prod_{p \leq z} ( 1-\omega(p)^cp^{-s})^{-1}$. In order to prove it let us consider the function $-\sum_{p\leq z}\log(1-\omega(p)^cp^{-s})$. Then
\begin{align*}
-\sum_{p\leq z}\log\left(1-\frac{\omega(p)^c}{p^s}\right) &= 
\sum_{p\leq z}\sum_{k=1}^\infty \frac{\omega(p)^{ck}}{kp^{ks}} = 
\sum_{p\leq z}\frac{\omega(p)^c}{p^s} + \sum_{p\leq z}\sum_{2\leq k} \frac{\omega(p)^{ck}}{kp^{ks}}.
\end{align*}
Let us estimate the latter sum on the right hand side. For $\sigma>1/2$ one has
\begin{align*}
\left|\sum_{p\leq z}\sum_{2\leq k} \frac{\omega(p)^{ck}}{kp^{ks}} \right|&\leq \sum_{p\leq z}\sum_{2\leq k} \frac{1}{p^{k\sigma}} = \sum_{p\leq z}\frac{1}{p^{2\sigma}-p^\sigma} \leq 7\sum_{p\leq z}\frac{1}{p^{2\sigma}} \\
&\leq 7 \sum_{2 \le n \leq z}\frac{1}{n^{2\sigma}} \leq 7\int_1^z t^{-2\sigma}dt = 
\frac{7(1-z^{1-2\sigma})}{2\sigma-1} .
\end{align*}
To consider the former sum, we put
$$\omega (n) := \begin{cases}\omega (p) &\text{if \ }n=p\leq z,\\0 &\text{otherwise,}\end{cases} \qquad
\Omega_z := \sum_{n=1}^z \omega(n)^c.
$$
Then, by partial summation, we obtain 
$$
\sum_{p\leq z}\frac{\omega(p)^c}{p^s} = \sum_{n=1}^z \frac{\omega(n)^c}{n^s} = 
\frac{\Omega_z}{z^s} - \sum_{n=1}^{z-1}\Omega_n\left(\frac{1}{(n+1)^s}-\frac{1}{n^s}\right) 
= \frac{\Omega_z}{z^s} + s\sum_{n=1}^{z-1}\Omega_n\int_n^{n+1}t^{-s-1}dt .
$$
Let us notice that $\omega(p)^c$ is a nontrivial root of unity, since $\gcd(a,b)=1$ implies $a-b\nmid a,b$. Hence we have 
\begin{align*}
\left| \sum_{p\leq z}\frac{\omega(p)^c}{p^s} \right| \ll 
z^{-\sigma}+|s|\int_1^z t^{-\sigma-1}dt = z^{-\sigma} +\frac{|s|}{\sigma} \left(1-z^{-\sigma}\right) \leq 
\frac{1}{z^\sigma}+2|s|\left(1-\frac{1}{z^\sigma}\right),
\end{align*}
where the constant in symbol $\ll$ is equal to $|a-b|$. Therefore we obtain (\ref{eq:ztomesti}).

Now it suffices to follow the steps of the proof of \cite[Theorem 1]{Garu} and the following fact. Let $\| x \|$ give the the distance from a real number $x$ to the the nearest integer. Then the set of positive real numbers $\tau$ satisfying
\[
\max_{p_m \leq z} \left\Vert\tau\frac{\log p_m}{2\pi}-\frac{m}{a-b}\right\Vert<\delta
\]
has a positive density for every positive $\delta$ by the Kronecker approximation theorem. Thus it holds for sufficiently large $z$
\begin{equation*}
\| \log \zeta (s+ic\tau) - \log \zeta_z (s, \omega^c) \|_K < \varepsilon , \quad c=a,b .
\end{equation*}
This completes the proof.
\end{proof}


\end{document}